\numberwithin{equation}{section}
\newtheorem{prop}{Proposition}[section]
\newtheorem{theo}[prop]{Theorem}
\newtheorem{lem}[prop]{Lemma}
\newtheorem{coro}[prop]{Corollary}
\newtheorem{rem}[prop]{Remark}
\newtheorem{defi}[prop]{Definition}
\def\begeq{\begin{equation}}
\def\endeq{\end{equation}}
\begin{document}
\title{Finiteness of $\mathbb Q$-Fano compactifications of semisimple group with K\"ahler-Einstein metrics}
\author{Yan Li$^*$ and ZhenYe Li $^{\dag}$}

\address{$^{*}$School of Mathematics and Statistics, Beijing Institute of Technology, Beijing, 100081, China.}
\address{$^\dag$College of Mathematics and Physics, Beijing University of Chemical Technology, Beijing, 100029, China.}
\email{liyan.kitai@yandex.ru,\ \ \ lizhenye@pku.edu.cn}

\thanks {$^*$Partially supported by the China Post-Doctoral Grant BX20180010.}

\subjclass[2000]{Primary: 53C25; Secondary: 58D25}

\keywords{K\"ahler-Einstein metrics, $\mathbb Q$-Fano compactifications, polytopes}

\begin{abstract}
In this note, we give a way to classify $\mathbb Q$-Fano compactifications of a semisimple group $G$. 
We will prove that there are only finitely many such $\mathbb Q$-Fano $G$-compactifications, which admits (singular) K\"ahler-Einstein metrics. As an application, this improves a former result in \cite{LTZ2}.
\end{abstract}
\maketitle

\section{Introduction}
Let $G$ be an $n$-dimensional connect, complex reductive group which is the complexification of a compact Lie group $K$, with $J$ its complex structure.
Let $M$ be a projective normal variety. $M$ is called a {\it (bi-equivariant) compactification of $G$} (or \emph{$G$-compactification} for simplicity) if it admits a holomorphic $G\times G$-action with an open and dense orbit isomorphic to $G$ as a $G\times G$-homogeneous space.  $(M, L)$ is called a {\it polarized compactification} of $G$   if  $L$ is a $G\times G$-linearized ample line bundle on $M$. In particular, when $K^{-1}_M$ is an ample $\mathbb Q$-Cartier line bundle and $L=K^{-1}_M$, we call $M$ a \emph{$\mathbb Q$-Fano $G$-compactification} (cf. \cite[Section 2.1]{AK} and \cite{AB1, AB2, Timashev-Sbo}). For more knowledge and examples, we refer the reader to \cite{Timashev-Sbo, AK, Del2, Del3}, etc.

Let $(M,K^{-1}_M)$ be a $\mathbb Q$-Fano compactification of $G$. Fix a maximal complex torus $T^{\mathbb C}$ (denote by $r$ its dimension) of $G$. It is known that the closure $Z$ of $T^\mathbb C$ in $M$, together with $K^{-1}_M|_Z$ is a polarized toric variety. Indeed, $K^{-1}_M|_Z$ is $WT^\mathbb C$-linearized, where $W$ is the Weyl group with respect to $G$ and $T^\mathbb C$. The polytope associated to $(M,K^{-1}_M)$ is defined as the associated polytope of $(Z,K_M^{-1}|_Z)$ (cf. \cite[Section 2.1]{AK} and \cite[Section 2.2]{Del2}). It is a strictly convex, $W$-invariant rational polytope in $\mathfrak a^*=J\mathfrak t^*$.
Choose a set of positive roots $\Phi_+$ and denote by $\mathfrak a^*_+$ the corresponding positive Weyl chamber. Choose a $W$-invariant inner product $\langle\cdot,\cdot\rangle$ on $\mathfrak a^*$ which extends the Cartan-Killing form on the semisimple part $\mathfrak a^*_{ss}$ (cf. \cite[Introduction]{Del2}). Let $P_+$ be the positive part of $P$ defined by
$$P_+\,=\,\{y\in P|~ \langle\alpha,y\rangle\geq0, ~\forall \alpha\in \Phi_+\}.$$
Define the weighted barycenter of $P_+$ by
$$\mathbf{b}(P_+)\,=\,\frac{\int_{P_+}y\pi(y) \,dy}{\int_{P_+}\pi(y) \,dy},$$
where $\pi(y)=\prod_{\alpha\in\Phi_+}\langle\alpha,y\rangle^2.$
In \cite{LTZ2}, Li-Tian-Zhu proved the following criterion of existence of (singular) K\"ahler-Einstein metric on a $\mathbb Q$-Fano group compactification:
\begin{theo}\label{sg-bary-thm} Let $M$ be a $\mathbb Q$-Fano $G$-compactification whose associated polytope satisfies the fine condition.\footnote{We use the terminology ``fine" in sense of \cite{Do}, namely, each vertex of $P$ is the intersection of precisely $r$ facets.} Then $M$ admits a K\"ahler-Einstein metric if and only if
\begin{align}\label{sg-bary}
\mathbf{b}(P_+)\,\in\, 2\rho+\Xi,
\end{align}
where $2\rho=\sum_{\alpha\in\Phi_+}\alpha$
and
$\Xi$ is the relative interior of the cone generated by $\Phi_+$.
\end{theo}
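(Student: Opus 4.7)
The strategy is to reduce the Kähler--Einstein equation on $M$ to a real Monge--Ampère equation on $\mathfrak a$ via the $KAK$ decomposition, reformulate existence as coercivity of an associated Ding-type functional on the space of $W$-invariant convex functions on $\mathfrak a$, and finally identify the coercivity threshold with the barycenter condition \eqref{sg-bary}.

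\emph{Step 1 ($KAK$ reduction).} Any $K\times K$-invariant $\omega_0$-plurisubharmonic potential $\psi$ on $M$ is determined, on the open $G$-orbit and modulo $K\times K$, by a $W$-invariant convex function $u$ on $\mathfrak a$: $\psi(\exp x)=u(x)+\text{const}$. The Kähler--Einstein equation $\mathrm{Ric}(\omega_\psi)=\omega_\psi$ then reduces, on this chart, to a real Monge--Ampère equation on $\mathfrak a$ with Jacobian weight $\prod_{\alpha\in\Phi_+}\sinh^2\langle\alpha,x\rangle$ coming from the $KAK$ map and right-hand side $e^{-u(x)+\langle 2\rho,x\rangle}$. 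The \emph{fine} hypothesis ensures that the Legendre dual $u^*$ is a convex function on the closed polytope $P$ with controlled boundary behavior at the vertices, making the problem genuinely variational over $P$.

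\emph{Step 2 (Ding functional and asymptotic slope).} After the above reduction, the Ding functional for $(M,K_M^{-1})$ can be written as
$$\mathcal D(u)\,=\,-\log\!\int_{\mathfrak a}e^{-u(x)+\langle 2\rho,x\rangle}\prod_{\alpha\in\Phi_+}\sinh^2\langle\alpha,x\rangle\,dx\,-\,\frac{1}{|W|\,\mathrm{vol}(P)}\int_{P}u^*(y)\,dy,$$
whose critical points on the space of finite-energy $K\times K$-invariant potentials are precisely the singular Kähler--Einstein metrics on $M$. Testing along an affine ray $u_t=u_0+t\,\langle\xi,\cdot\rangle$ with $\xi\in\overline\Xi\setminus\{0\}$, Weyl-symmetrizing the first integral to $\mathfrak a_+$, and applying the change of variables $y=\nabla u_0(x)$ to the second (which, together with the $\sinh$-factors, produces the polynomial weight $\pi(y)$ on $P_+$), yields the asymptotic slope
$$\lim_{t\to\infty}\frac{\mathcal D(u_t)}{t}\,=\,c\cdot\langle\xi,\,\mathbf b(P_+)-2\rho\rangle,\qquad c>0.$$

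\emph{Step 3 (Equivalence with the barycenter condition).} Coercivity of $\mathcal D$ along all such rays is therefore equivalent to $\langle\xi,\mathbf b(P_+)-2\rho\rangle>0$ for every $\xi\in\overline\Xi\setminus\{0\}$, i.e.\ to $\mathbf b(P_+)-2\rho\in\Xi$. Necessity of \eqref{sg-bary} follows by inserting a Kähler--Einstein solution into the slope formula and observing that its value must be nonnegative. For sufficiency, coercivity of $\mathcal D$ produces a finite-energy minimizer $u_\infty$, and real Monge--Ampère regularity on convex domains (adapted to the $\sinh$-weight and to $P$, as in \cite{Del2,Del3}) yields a smooth solution on the regular locus, extending to a genuine singular Kähler--Einstein current on $M$ in the sense of Eyssidieux--Guedj--Zeriahi. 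The main technical obstacle is the rigorous asymptotic slope computation of Step 2 together with the uniform coercivity estimates of Step 3 up to the boundary of $\mathfrak a_+$, where the $\sinh$-weights degenerate along the root hyperplanes; the \emph{fine} assumption is precisely what is needed to tame this boundary behavior.
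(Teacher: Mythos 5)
Your overall route --- $KAK$ reduction to a real Monge--Amp\`ere problem, a Ding-type functional written through the Legendre transform, and identification of the barycenter condition with a coercivity threshold --- is essentially the route of \cite{LTZ2} (building on \cite{Del2,LZZ}), which is exactly what this paper invokes for Theorem \ref{sg-bary-thm}; the paper does not reprove the theorem, and its only independent contribution to it is a proof of the necessity direction by K-stability (Lemma \ref{nece-sg-bary}): one writes the generalized Futaki invariant of a $G\times G$-equivariant test configuration as $\int_{P_+}\langle y-2\rho,\nabla f_{\mathfrak U}\rangle\pi\,dy$ following \cite{AK}, destabilizes with $f=\max_{w\in W}\langle w\cdot\varpi_1,y\rangle$, and applies \cite{Berman-inve}. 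That argument has the advantage of not using the fine hypothesis at all.

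Two steps of your sketch have genuine gaps. First, in Step 3 you pass from positivity of the slope of $\mathcal D$ along affine rays $u_0+t\langle\xi,\cdot\rangle$ to coercivity of $\mathcal D$ and hence to existence of a minimizer. Slope positivity along affine rays is only a necessary condition for properness; the space of $W$-invariant convex functions of finite energy is infinite dimensional, and the sufficiency direction requires a lower bound of the form $\mathcal D(u)\geq\delta\,\inf_{\sigma}E(u\circ\sigma)-C$ valid for \emph{arbitrary} such $u$ (modulo the group action). Establishing this is the hard analytic core of \cite{LZZ,LTZ2}: one must decompose a general normalized convex function and show that the linear term governed by $\mathbf b(P_+)-2\rho$ dominates the contributions near the Weyl walls where $\pi$ degenerates. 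This cannot be deduced from the ray computation. Second, your necessity argument (``insert the solution and observe the slope is nonnegative'') only yields $\mathbf b(P_+)-2\rho\in\overline\Xi$, the closed cone, whereas \eqref{sg-bary} requires the relative interior. Ruling out the boundary case needs an extra argument: for instance, if $\mathbf b(P_+)-2\rho$ lies on a wall of $\overline\Xi$, the non-product test configuration above has vanishing Futaki invariant, contradicting the K-polystability forced by \cite{Berman-inve}. A smaller point: the linear part of your displayed Ding functional should be $\frac1V\int_{P_+}u^*\pi\,dy$ with $V=\int_{P_+}\pi\,dy$; the weight $\pi$ is intrinsic to the dual expression of the Monge--Amp\`ere energy and the integral is over $P_+$, not an unweighted integral over $P$.
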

Theorem \ref{sg-bary-thm} was first proved by Delcroix \cite{Del2} for smooth Fano compactifications. Later in \cite{LZZ}, Li-Zhou-Zhu gave a generalization of Theorem \ref{sg-bary-thm} for general smooth polarized compactifications. The approach of \cite{LZZ} is to study the properness of Mabuchi K-energy. In \cite{LZ}, Li-Zhou studied the properness of (modified) Ding functional on smooth Fano compactifications.
In \cite{LTZ2}, Li-Tian-Zhu generalized the estimate in \cite{LZZ,LZ} and proved that \eqref{sg-bary} is equivalent to the properness of Ding functional (modulo group action) on the $\mathcal E^1_{K\times K}(M,K_M^{-1})$-space introduced by \cite{BBEGZ, Coman-Guedj-Sahin-Zeriahi}. Hence get the existence result by using the variation method. We highlight that the proof of the necessity of \eqref{sg-bary} does not require the ``fine" assumption of $P$. This can be done by using \cite[Theorem 1.1]{Berman-inve} and a formula of generalized Futaki invariant in \cite[Theorem 3.3]{AK}. In Lemma \ref{nece-sg-bary}, we will show this in detail.

Theorem \ref{sg-bary-thm} gives an explicit and practical way to test existence of (singular) K\"ahler-Einstein metric. One of the main results of \cite{LTZ2} is that the limit of K\"ahler-Ricci flow, starting from a smooth K-unstable $SO_4(\mathbb C)$-compactification is no longer an $SO_4(\mathbb C)$-compactification. On one direction, the famous Hamilton-Tian conjecture (cf. \cite{Ti97, Bam, CW}) suggests that the limit should be a $\mathbb Q$-Fano variety with a singular K\"ahler-Ricci soliton of the same volume as that of initial metric. On the other hand, \cite[Theorem 1.3]{LTZ2} shows that there is no $SO_4(\mathbb C)$-compactification, admitting (singular) K\"ahler-Einstein metric, has the prescribed volume at the same time. There are of course infinitely many $\mathbb Q$-Fano $SO_4(\mathbb C)$-compactifications. In \cite[Section 7.3]{LTZ2}, the volume condition plays a crucial role in reducing the problem to finite cases. In this note, we will refine the estimate of \cite[Section 7.3]{LTZ2} to throw away the volume restriction and completely solve the problem of exhausting $\mathbb Q$-Fano $SO_4(\mathbb C)$-compactifications with K\"ahler-Einstein metrics. Indeed, we can prove a more general result:

\begin{theo}\label{finiteness-thm}
For any semisimple $G$, there are at most finitely many $\mathbb Q$-Fano $G$-compactifications, 
that admits (singular) K\"ahler-Einstein metrics.
\end{theo}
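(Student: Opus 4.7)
The plan is to translate the problem into one about convex polytopes and then use Theorem \ref{sg-bary-thm} to bound their combinatorial complexity. By the classification of bi-equivariant group compactifications (see \cite{AK, Del2, Timashev-Sbo}), a $\mathbb Q$-Fano $G$-compactification $M$ corresponds bijectively to a strictly convex, $W$-invariant, rational polytope $P \subset \mathfrak a^*$ whose facets are constrained by an anticanonical height formula: for each facet $F_u$ with primitive inward lattice normal $u$, the defining equation $\langle u, y\rangle = h_u$ has $h_u$ explicitly determined by $u$ through a formula involving $\langle u, 2\rho\rangle$ (cf.\ \cite[Section 2.1]{AK}, \cite[Section 2.2]{Del2}). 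Hence $P$ is completely encoded by its finite primitive-normal set $\mathcal U(P)$, and I would reduce Theorem \ref{finiteness-thm} to showing that the $\mathcal U(P)$'s coming from K\"ahler--Einstein $M$ form a finite collection. Applying Theorem \ref{sg-bary-thm} (necessity, which does not require the ``fine'' assumption; see Lemma \ref{nece-sg-bary}), the K\"ahler--Einstein condition becomes the barycenter inclusion $\mathbf b(P_+) \in 2\rho + \Xi$.

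The hard step, refining the estimate in \cite[\S 7.3]{LTZ2}, will be to prove that this barycenter condition forces a uniform bound $|u| \le C(G)$ on every primitive facet normal. The guiding observation is that the height formula pins any facet $F_u$ to within distance $O(1/|u|)$ of the hyperplane through $2\rho$ with normal $u$, and by $W$-invariance the same pinching occurs at every Weyl translate $w\cdot F_u$. When $|u|$ is large, this truncates $P_+$ asymmetrically near $2\rho$, so the dominant contribution to the $\pi$-weighted moment $\int_{P_+}(y-2\rho)\pi(y)\,dy$ comes from the corresponding thin wedge. I would expand $\mathbf b(P_+) - 2\rho$ in the parameter $1/|u|$ and show that the leading direction eventually lies outside the cone $\Xi$, contradicting the barycenter condition as soon as $|u|$ exceeds an explicit threshold. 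In \cite{LTZ2} the analogous asymptotic was sidestepped by invoking the Hamilton--Tian volume constraint to reduce to finitely many cases; here no such constraint is available, and the conclusion must be drawn purely from the rigidity of the barycenter condition, which is precisely where the refinement over \cite{LTZ2} is required.

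Once $|u| \le C(G)$ is secured, finiteness is immediate: $\mathcal U(P)$ is then a subset of the finite lattice ball $N_{\mathbb Z} \cap \overline{B_{C(G)}}$, so only finitely many configurations are possible, and each determines $P$ (and hence the isomorphism class of $M$) uniquely via the height formula. The main obstacle is therefore the $\pi$-weighted moment analysis of the second paragraph: one needs a quantitative understanding of how the contribution of a single pinched facet propagates to $\mathbf b(P_+)$ under $W$-symmetry, without the a priori volume bound that was available in \cite{LTZ2}.
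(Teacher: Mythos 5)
Your outline matches the paper's strategy in broad strokes---reduce to the moment polytope via the anticanonical support formula $\lambda_A=1+2\rho_A(u_A)$, invoke the necessity of the barycenter condition without the fineness hypothesis (Lemma \ref{nece-sg-bary}), rule out ``pinched'' facets, and finish by lattice finiteness---but the decisive step is precisely the one you defer. The assertion that a facet normal $u$ with $\langle 2\rho,u\rangle$ large forces $\mathbf b(P_+)\notin 2\rho+\overline\Xi$ is the entire analytic content of the theorem (Proposition \ref{for-large-I}), and ``expand $\mathbf b(P_+)-2\rho$ in $1/|u|$ and show the leading direction exits $\Xi$'' is not yet an argument: away from the pinched facet the polytope is completely unconstrained, so there is no asymptotic expansion of the barycenter to speak of, and the heuristic that ``the dominant contribution to the moment comes from the thin wedge'' is in fact backwards---the point is that the slab $P_+\cap\{\langle u,y\rangle\geq\langle 2\rho,u\rangle\}$ carries a \emph{negligible} fraction of the $\pi$-weighted mass. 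The way the paper closes this gap is elementary: since $0\in P$, the cone $\Omega_1$ over the slice $\hat P=P_+\cap\{\langle u,y\rangle=\langle 2\rho,u\rangle\}$ lies in $P_+$; homogeneity of $\pi$ (of degree $n-r$) gives $\int_{\Omega_1}\langle u,y\rangle\pi\,dy=\tfrac{n}{n+1}\langle 2\rho,u\rangle\,\mathrm{Vol}_\pi(\Omega_1)$, while convexity traps the part of $P_+$ beyond the slice inside $\{t\hat P:\,1\leq t\leq 1+\tfrac{1}{\langle 2\rho,u\rangle}\}$, whose $\pi$-mass is at most $\bigl((1+\tfrac{1}{\langle 2\rho,u\rangle})^n-1\bigr)\mathrm{Vol}_\pi(\Omega_1)$. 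Combining these yields the single linear inequality $\langle u,\mathbf b(P_+)-2\rho\rangle<0$ once $\langle 2\rho,u\rangle\geq\omega(n)$, which already suffices because $u\in\overline{\mathfrak a_+}$ pairs nonnegatively with $\overline\Xi$; no identification of a ``leading direction'' is needed. As written, your proposal names this estimate as ``the main obstacle'' and leaves it unproved, so the theorem is not established.

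Two smaller points. First, the right largeness parameter is $\langle 2\rho,u\rangle$ rather than $|u|$; these are comparable only because outer-facet normals lie in $\overline{\mathfrak a_+}$ (a fact you must extract from convexity and $W$-invariance) and $2\rho$ is strictly dominant, the remaining facets being Weyl translates of outer ones. Second, the paper is more economical than your plan: it runs the barycenter estimate only for the single facet $F_{A_0}$ crossed by the ray $\mathbb R_+\rho$, and then bounds \emph{all} outer normals via the convexity inequality $\rho(u_A)\leq\rho(u_{A_0})$ together with the nonnegativity of the inverse Cartan matrix (Lemma \ref{linear-alg}, Corollaries \ref{inverse-cartan} and \ref{coe-u}), which converts the bound on $\langle 2\rho,u_A\rangle$ into integer bounds on the coordinates of $u_A$ in the fundamental-weight basis. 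Bounding every facet directly, as you propose, would also work with the same cone argument, but either way the quantitative moment estimate must actually be carried out.
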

\begin{rem}
Theorem \ref{finiteness-thm} can not be true for a general reductive group. For example, let $G$ be a $2$-dimensional complex torus. Consider the toric surfaces $M_{p,q}$ whose polytope is
$$P(p,q)=\{(x,y)|1-(p|x|+q|y|)\geq0\},$$
where $(p,q)$ is a prime vector with $p,q\geq0$. Obviously by Theorem \ref{sg-bary-thm} each $M_{p,q}$ admits a toric (singular) K\"ahler-Einstein metric.
\end{rem}

We will prove Theorem \ref{finiteness-thm} in Sections 3-4. Consider the case of $G=SO_4(\mathbb C)$. By using Propositions \ref{I(p)-bound} and \ref{for-large-I}, we can reduce the problem of finding $\mathbb Q$-Fano $G$-compactification with K\"ahler-Einstein metrics to a problem of finite cases. Note that in this case rank$(G)=2$, $P$ is always fine. Hence we can further use Theorem \ref{sg-bary-thm} to test the existence.
More precisely, we have the following result, which improves \cite[Theorem 1.3]{LZZ}:
\begin{theo}\label{SO-finiteness-thm}
There are only two $\mathbb Q$-Fano  $SO_4(\mathbb C)$-compactifications,  admitting (singular) K\"ahler-Einstein metrics. Namely, Cases 5.1 and 5.2 given in Section 5.
\end{theo}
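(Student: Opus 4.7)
The plan is to combine the quantitative ingredients behind Theorem~\ref{finiteness-thm} (namely Propositions~\ref{I(p)-bound} and \ref{for-large-I}) with the barycenter criterion in Theorem~\ref{sg-bary-thm}, specialized to $G=SO_4(\mathbb{C})$. Since $\rank SO_4(\mathbb{C})=2$, every rational polytope in $\mathfrak{a}^*$ is automatically fine (each vertex is the intersection of exactly two facets), so Theorem~\ref{sg-bary-thm} applies to any $\mathbb{Q}$-Fano $SO_4(\mathbb{C})$-compactification without further assumptions. Hence, once the general machinery cuts the infinite list of $\mathbb{Q}$-Fano $SO_4(\mathbb{C})$-compactifications down to a finite, explicit family of candidates, it suffices to test the barycenter condition $\mathbf{b}(P_+)\in 2\rho+\Xi$ polygon by polygon.

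First I would record the combinatorial setup. The root system of $SO_4(\mathbb{C})$ is $A_1\times A_1$, so $W=\mathbb{Z}_2\times\mathbb{Z}_2$ acts on $\mathfrak{a}^*\cong\mathbb{R}^2$ by sign changes of the two coordinates, the positive chamber $\mathfrak{a}^*_+$ is the first quadrant, and $2\rho$ is determined by the Cartan--Killing normalization. The polytope $P$ is a $W$-invariant rational convex polygon; the $\mathbb{Q}$-Fano condition pins down the integrality of facet equations and forces $2\rho$ to sit in the relative interior of $P_+$ in a precise sense. The positive part $P_+\subset\mathfrak{a}^*_+$ is thus a rational polygon, and any such $P$ is recovered from $P_+$ by symmetrization under $W$.

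Next I would feed this setup into Propositions~\ref{I(p)-bound} and \ref{for-large-I}. The key point is that these yield explicit upper bounds on the numerical parameters describing the facets of $P_+$ beyond which the weighted barycenter
\begin{equation*}
\mathbf{b}(P_+)=\frac{\int_{P_+}y\,\pi(y)\,dy}{\int_{P_+}\pi(y)\,dy},\qquad \pi(y)=\prod_{\alpha\in\Phi_+}\langle\alpha,y\rangle^2,
\end{equation*}
is driven out of the shifted cone $2\rho+\Xi$. Because we are in the plane, these bounds translate into a concrete finite list of candidate polygons (triangles, quadrilaterals, pentagons, and hexagons in $\mathfrak{a}^*_+$ with bounded facet-defining integers).

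Finally, for each candidate in this finite list I would compute $\mathbf{b}(P_+)$ by elementary polynomial integration over the polygon and test membership in $2\rho+\Xi$. The individual computations are routine, but the main obstacle lies in the case analysis itself: one must be careful to (a) use the bounds from Propositions~\ref{I(p)-bound} and \ref{for-large-I} sharply enough that the candidate list stays short, and (b) exhaust every combinatorial type of $P_+$ compatible with the $\mathbb{Q}$-Fano and $W$-invariance conditions, so as to be certain that exactly the two polygons displayed as Cases~5.1 and 5.2 in Section~5 pass the test. Removing the volume hypothesis from the argument of \cite[Section 7.3]{LTZ2} is precisely what Propositions~\ref{I(p)-bound} and \ref{for-large-I} accomplish, and is what upgrades the earlier partial result to the present complete classification.
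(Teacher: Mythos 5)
Your proposal follows essentially the same route as the paper: the authors compute the threshold $\omega(6)=3.83$ from Proposition~\ref{for-large-I} (with $n=\dim SO_4(\mathbb{C})=6$), reduce to the finitely many polytopes with $I(P)\leq 3$ via Proposition~\ref{I(p)-bound}, observe that rank $2$ makes every $P$ fine so Theorem~\ref{sg-bary-thm} applies, and then enumerate the candidates and test the barycenter condition by computer. The only thing your outline leaves unexecuted is the actual finite enumeration and barycenter check, which the paper delegates to the Mathematica code reproduced in Section~5.
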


\subsection*{Acknowledgement} We would like to thank Professor Xiaohua Zhu for helpful comments which improve this paper a lot.

\section{Preliminary}

\subsection{Polytopes of $\mathbb Q$-Fano group compactifications}
Let $M$ be a $\mathbb Q$-Fano compactification of $G$ with $Z$ the closure of a maximal complex torus $T^{\mathbb C}$ as before. 
We  first  characterize the associated polytope $P$ of $(M,K^{-1}_M)$.  Let $\{F_A\}_{A=1,...,d_0}$ be the facets of $P$ and $\{F_A\}_{A=1,...,d_+}$ be those whose interior intersects $\mathfrak a_+^*$. Suppose that
\begin{align}\label{sg-polytope-eq}
P=\cap_{A=1}^{d_0}\{y\in\mathfrak a^*|l_A^o(y):=\lambda_{A}-u_A(y)\geq0\}
\end{align}
for some prime vector $u_A\in\mathfrak N$, the lattice of one-parameter subgroups, and the facet
$F_A\subseteq\{l^o_{A}=0\},A=1,...,d_0.$ By the $W$-invariance, for each $A\in\{1,...,d_0\}$, there is some $w_A\in W$ such that $w_A(F_A)\in\{F_B\}_{B=1,...,d_+}$. Denote by $\rho_A\,=\,w_A^{-1}(\rho)$. Then the number $\rho_A(u_A)=\frac12\sum_{\alpha\in\Phi_+}|\alpha(u_A)|$ is independent of the choice of $w_A\in W$ and hence is well-defined.

The following is due to \cite[Section 3]{Brion89}.

\begin{lem}\label{sg-polytope-coefficient}
Let $M$ be a $\mathbb Q$-Fano compactification of $G$ with $P$ being the associated polytope. Then for each $A=1,...,d_0$, it holds
\begin{align}\label{sg-relation-lambda}\lambda_{A}\,=\,1+2\rho_A(u_A).
\end{align}
\end{lem}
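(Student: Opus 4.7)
The plan is to identify $K_M^{-1}|_Z$ as an explicit Weyl-symmetric $T$-equivariant divisor on the toric subvariety $Z$ by computing the divisor of a canonical rational top form on $M$. Let $\Omega$ be a $G\times G$-invariant rational top form on $M$; this form is unique up to a nonzero scalar, since $G\times G$ acts on $M$ with open orbit $G$ and the only bi-invariant top forms on the Lie group $G$ form a one-dimensional space (Haar). Because $\Omega$ is regular and nonvanishing on $G$, its divisor on $M$ is supported on the $G\times G$-stable boundary $M\setminus G=\bigcup_A D_A$, so
\[
K_M = (\Omega) = -\sum_A \lambda_A\,D_A
\]
for integers $\lambda_A$; the $\mathbb{Q}$-Fano hypothesis forces $\lambda_A>0$. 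Restricting this identity to $Z$, the divisor $D_A|_Z$ is the union of those $T$-invariant prime divisors of $Z$ whose primitive vectors lie in the $W$-orbit of $u_A$, which gives exactly the polytope description \eqref{sg-polytope-eq}. It therefore suffices to compute the order of pole $\lambda_A$ of $\Omega$ along a generic point of $D_A$.

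By $W$-equivariance of the whole setup we may reduce to $A\in\{1,\ldots,d_+\}$, so that $w_A=\mathrm{id}$, $\rho_A=\rho$, and $\alpha(u_A)\ge 0$ for every $\alpha\in\Phi_+$. On the open Bruhat cell $U^-\,T\,U\subset G$ the form factorises as
\[
\Omega \;=\; \omega_T\wedge\omega_{U^-}\wedge\omega_U,
\]
where $\omega_T$ is the Haar form on $T$ and $\omega_{U^\pm}$ are the translation-invariant top forms on the unipotent radicals $U^\pm$. As the one-parameter subgroup $\gamma(t)=\exp(-(\log t)\,u_A)$ limits into a generic smooth point of $D_A$, the toric factor $\omega_T$ acquires a simple pole along $D_A$, exactly as in the purely toric case. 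Under the adjoint action of $\gamma(t)$ the root space $\g_\alpha$ is scaled by $t^{-\alpha(u_A)}$, so $\omega_{U^-}\wedge\omega_U$ picks up an additional pole of total order $\sum_{\alpha\in\Phi_+}\alpha(u_A) = 2\rho(u_A)$. Summing the two contributions yields $\lambda_A = 1+2\rho(u_A) = 1+2\rho_A(u_A)$, and the result for arbitrary $A$ follows from the $W$-invariance $\lambda_A = \lambda_{w_A(A)}$.

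The main difficulty is accounting rigorously for the unipotent contribution: one must verify that near a smooth generic point of $D_A$ the form $\omega_{U^-}\wedge\omega_U$ indeed extends with precisely the predicted pole order. This can be made rigorous either by constructing $G\times G$-equivariant \'etale local coordinates on $M$ near a generic point of $D_A$ in which the normal bundle decomposes into root weight spaces under the $T$-action, or by directly invoking Brion's general formula \cite{Brion89} for the canonical class of a spherical variety: the restrictions to $Z$ of the $G\times G$-stable divisors contribute $1$, while the color divisors sum, over the Weyl orbit of $F_A$, to exactly $2\rho_A(u_A)\,D_A^Z$ along each component.
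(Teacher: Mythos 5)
Your proposal is correct in substance but takes a different primary route from the paper. The paper does not compute pole orders of a volume form at all: it quotes Brion's formula expressing $-mK_M$ as $m(\sum_{A'}X_{A'}+2\sum_{\alpha_i\in\Phi_{+,s}}Y_{\alpha_i})$, where the $X_{A'}$ are the $G\times G$-invariant boundary divisors and the $Y_{\alpha_i}$ are the colors, observes that this divisor has $B^+\times B^-$-weight $2\rho$, and then adds the divisor of a semi-invariant rational function of weight $-2\rho$ to replace the color contribution by the coefficient $2\rho_A(u_A)$ on each boundary divisor; the restriction to $Z$ is handled by the bijection of \cite[Theorem 2.4]{AK} between $G\times G$-invariant divisors and $W$-orbits of toric divisors. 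Your primary argument --- taking the bi-invariant rational top form $\Omega$, noting its divisor is supported on the boundary, and computing the pole order along a generic point of $D_A$ via the Bruhat cell --- is a more self-contained, differential-geometric derivation of the same coefficient; your fallback option of ``directly invoking Brion's general formula'' is literally the paper's proof. What your route buys is transparency about where $1+2\rho_A(u_A)$ comes from (one unit from the toric direction, $2\rho(u_A)$ from the unipotent directions); what the paper's route buys is that all the hard local geometry is outsourced to \cite{Brion89} and \cite{AK}.

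One step of your computation is stated too loosely to stand on its own, and you rightly flag it. As written, ``$\mathrm{Ad}(\gamma(t))$ scales $\g_\alpha$ by $t^{-\alpha(u_A)}$, so $\omega_{U^-}\wedge\omega_U$ picks up a pole of order $\sum_{\alpha\in\Phi_+}\alpha(u_A)$'' is not yet an argument: if one naively conjugates both unipotent factors, the determinant contributions from $\mathfrak u^+$ and $\mathfrak u^-$ cancel, giving $0$ rather than $2\rho(u_A)$. The correct bookkeeping is that in Bruhat coordinates $(u,t,v)\mapsto utv^{-1}$ the bi-invariant form equals $t^{-2\rho}\,\omega_{U^+}\wedge\omega_T\wedge\omega_{U^-}$ (the Jacobian character is $-2\rho$, coming from conjugating only one side), and the character $t^{-2\rho}$ has order $-2\rho(u_A)$ along the toric divisor associated to $u_A$, while $\omega_T$ contributes the simple pole. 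With that correction, and the local structure theorem to justify that $u,v$ extend to regular coordinates near a generic point of the boundary divisor, your computation closes.
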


\begin{proof}
Denote by $B^+$ the (positive) Borel subgroup of $G$ corresponding to $(T^\mathbb C,\Phi_+)$ and $B^-$ be the opposite one. Suppose that $-mK_M$ is a Cartier divisor for some  $m\in\mathbb N$. By \cite[Section 3]{Brion89},  there exists a $B^+\times B^-$-semi-invariant section of $-K_M$,
$$-mK_M=m(\sum_{A'}X_{A'}+2\sum_{\alpha_i\in\Phi_{+,s}}Y_{\alpha_i}),$$
where $\{X_{A'}\}$ is the set of $G\times G$-invariant prime divisors and $Y_{\alpha_i}$ is the prime $B^+\times B^-$-semi-invariant divisor with weight $\alpha_i$ in $\Phi_{+,s}$, the set of simple roots in $\Phi_+$. Note that the corresponding $B^+\times B^-$-weight of this divisor is $2\rho$ (cf. \cite[Section 3.2.4]{Del3} and \cite[Section 7]{Timashev-Sbo}). Thus by adding the divisor of a $B^+\times B^-$-semi-invariant rational function $f$ with weight $-2\rho$, we get a $G\times G$-invariant divisor
$-K_M+\text{div}(f).$ On the other hand, by \cite[Theorem 2.4]{AK}, the prime $G\times G$-invariant divisors of $M$ are in bijections with $W$-orbits of prime toric divisors of $Z$.

Hence, we have
$$-mK_M|_Z\,=\,\sum_Am(1+2\rho_A(u_A))D_A,$$
where $D_A$ is the toric divisor of $Z$ associated to $u_A$. Thus the associated polytope of
 $(Z,-mK_M|_Z)$ is given by
$$P(Z,-mK_M|_Z)\,=\,\cap_{A=1}^{d_0}\{m(1+2\rho_A(u_A))-u_A(y)\geq0\},$$
which is precisely $mP$.  Thus (\ref{sg-relation-lambda}) is true.
\end{proof}

\subsection{Singular K\"ahler-Einstein metric}
For a $\mathbb Q$-Fano variety $M$, by Kodaira's embedding Theorem, there is an integer $\ell > 0$ such that we can embed $M$ into a projective space $\mathbb {CP}^N$ by a basis of $H^0(M, K_M^{-\ell})$, for simplicity, we assume $M\subset \mathbb {CP}^N$. Then we have a metric
$\omega_0\,=\,\frac{1}{\ell}\,\omega_{FS}|_{M}\,\in \,2\pi c_1(M),$
where $\omega_{FS}$ is the Fubini-Study metric of $\mathbb {CP}^N$.
Moreover, there is a Ricci potential $h_0$ of $\omega_0$ such that
$${\rm Ric}(\omega_0)-\omega_0\,=\,\sqrt{-1}\partial\bar\partial h_0, ~{\rm on} ~M_{\rm reg}.$$In the case that $M$ has only
klt-singularities, $e^{h_0}$ is $L^p$-integrate for some $p>1$ (cf. \cite{DT, BBEGZ}).
For a general (possibly unbounded) K\"ahler potential $\varphi$,  we define its complex Monge-Amp\`ere measure $\omega_{\varphi}^n$ by
$$\omega_{\varphi}^n\,=\,\lim_{j\to \infty}\,\omega_{\varphi_j}^n,$$
where $\varphi_j\,=\,{\rm max}\{\varphi,-j\}$.
According to \cite{BBEGZ}, we say that $\varphi$ (or $\omega_{\varphi}^n$)  has  full Monge-Amp\'ere (MA) mass if
$$\int_M \omega_{\varphi}^n\,=\,\int_M \omega_0^n.$$
The MA-measure $\omega_{\varphi}^n$ with full MA-mass has no mass on the pluripolar set of $\varphi$ in $M$. Thus we only need to consider the
measure on  $M_{\rm reg}$.  

\begin{defi}\label{sg-singular-ke} We call $\omega_{\varphi}$ a (singular) K\"ahler-Einstein metric on $M$ with full MA-mass if $\omega_{\varphi}^n$ has full MA-mass and $\varphi$ satisfies the following complex Monge-Amp\'ere equation,
\begin{align}\label{sg-singular-ke-equation}
\omega_{\varphi}^n\,=\,e^{h_0-\varphi}\omega_0^n.
\end{align}
\end{defi}

It has been shown in \cite{BBEGZ} that $\varphi$ is $C^\infty$ on $M_{\rm reg}$ if it is a solution of  (\ref{sg-singular-ke-equation}).  Thus $\omega_{\varphi}$ satisfies the K\"ahler-Einstein equation ${\rm Ric}(\omega_{\varphi})=\omega_{\varphi}$ on $M_{\rm reg}$.

It is showed in \cite[Section 6]{LTZ2} that under the condition \eqref{sg-bary}, the minimizer of the reduced Ding functional on a reduction of the space $\mathcal E^1_{K\times K}(M,K_M^{-1})$ exists and is a solution of \eqref{sg-singular-ke-equation}. See \cite{LTZ2} for details.

\subsection{Necessity of \eqref{sg-bary}}
In this section, we will prove the necessity of \eqref{sg-bary} by testing K-stability and using \cite[Theorem 1.1]{Berman-inve}. We also refer the readers to \cite{Del3}, where the K-stability of a general $\mathbb Q$-Fano spherical variety is discussed in a different framework. Finally, we highlight that the following lemma does not require that the polytope $P$ satisfies the fine condition.
\begin{lem}\label{nece-sg-bary}
Suppose that \begin{align}\label{bar-unstable}\mathbf {b}(P_+)-2\rho\not\in\overline\Xi.\end{align}
Then $M$ is K-unstable. In particular, it can not admit a singular K\"ahler-Einstein metric.
\end{lem}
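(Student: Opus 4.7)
The plan is to deduce K-instability of $M$ from (\ref{bar-unstable}) and then to conclude by invoking \cite[Theorem 1.1]{Berman-inve}, by which any $\mathbb Q$-Fano variety admitting a (singular) K\"ahler-Einstein metric is automatically K-semistable.

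The first step is a separation argument: since $\overline{\Xi}$ is a closed convex cone in $\mathfrak a^*$ and $\mathbf b(P_+)-2\rho\notin\overline{\Xi}$, by Hahn--Banach there exists a rational vector $\xi\in\mathfrak a^*$ lying in the closed dual cone
$$\overline{\Xi}^{\vee}\,=\,\{\xi\in\mathfrak a^*\,\mid\,\langle\alpha,\xi\rangle\geq 0\ \text{for all }\alpha\in\Phi_+\}$$
(i.e., the closed positive Weyl chamber) and satisfying $\langle\xi,\mathbf b(P_+)-2\rho\rangle<0$. To such $\xi$ I would attach a $G\times G$-equivariant $\mathbb Q$-test configuration $(\mathcal M,\mathcal L)$ of $(M,K_M^{-1})$ built from the facet-translation of $P$ in the direction $\xi$: via the correspondence between prime $G\times G$-invariant divisors of $M$ and $W$-orbits of prime toric divisors of the closure $Z$ of $T^{\mathbb C}$ in $M$ (cf.\ the proof of Lemma \ref{sg-polytope-coefficient} and \cite[Theorem 2.4]{AK}), this polytope deformation defines an equivariant $\mathbb Q$-degeneration of the toric slice $(Z,K_M^{-1}|_Z)$ which lifts to a test configuration of $(M,K_M^{-1})$, in the framework of \cite{Del3}. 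A genuinely non-product construction is unavoidable here, since for $G$ semisimple the classical Futaki invariant vanishes on all of $\mathfrak g\oplus\mathfrak g$, so the naive product test configurations carry no information.

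Next, I would evaluate the generalized Futaki invariant of $(\mathcal M,\mathcal L)$ using the formula of \cite[Theorem 3.3]{AK}. After averaging over $W$ to pass from $P$ to $P_+$ (producing the Weyl weight $\pi(y)=\prod_{\alpha\in\Phi_+}\langle\alpha,y\rangle^2$) and integrating by parts the boundary terms using Lemma \ref{sg-polytope-coefficient} to identify the contribution of $\lambda_A=1+2\rho_A(u_A)$ with $2\rho$, the formula should collapse to the closed form
$$\mathrm{Fut}(\mathcal M,\mathcal L)\,=\,C\cdot\bigl\langle\xi,\,\mathbf b(P_+)-2\rho\bigr\rangle\cdot\int_{P_+}\pi(y)\,dy,\qquad C>0,$$
which is strictly negative by our choice of $\xi$. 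Thus $M$ is K-unstable, and \cite[Theorem 1.1]{Berman-inve} rules out a singular K\"ahler-Einstein metric on $M$. It is notable that this direction-by-direction argument uses neither the ``fine'' condition on $P$ nor $W$-invariance of $\xi$, as was advertised before the statement of the lemma.

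The main obstacle I expect is verifying that the facet-translation of $P$ in the direction $\xi\in\overline{\Xi}^{\vee}$ actually yields a genuine $G\times G$-equivariant $\mathbb Q$-Cartier test configuration of $(M,K_M^{-1})$ --- and not merely a degeneration of the toric slice $Z$ --- and in extracting the stated closed-form expression from the somewhat opaque integral formula of \cite[Theorem 3.3]{AK}. The spherical-variety machinery of \cite{AK,Del3} is the natural apparatus in which to handle both of these steps.
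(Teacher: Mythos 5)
Your proposal is correct and follows essentially the same route as the paper: both reduce the lemma to the Alexeev--Katzarkov correspondence between $G\times G$-equivariant test configurations and $W$-invariant convex piecewise linear functions on $P$, compute the generalized Futaki invariant by the formula $\int_{P_+}\langle y-2\rho,\nabla f\rangle\pi\,dy$ (which is exactly your closed form once $f|_{P_+}=\langle\xi,\cdot\rangle$ for a dominant $\xi$), and conclude with \cite[Theorem 1.1]{Berman-inve}. The only cosmetic differences are that the paper chooses $\xi$ explicitly (a central vector in Case 1, or the fundamental weight $\varpi_1$ dual to a negative simple-root coefficient of $\mathbf b(P_+)-2\rho$ in Case 2) instead of by Hahn--Banach separation, and realizes the destabilizing test configuration via the $W$-invariant function $\max_{w\in W}\langle w\cdot\xi,y\rangle$ rather than your vaguer ``facet-translation'' of $P$.
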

\begin{proof}
By \cite[Section 2.4]{AK} (see also \cite[Section 4.2]{AB2}), we can associate to each $G\times G$-equivariant test configuration $\mathfrak U$ a unique $W$-invariant, convex piecewise linear function $f_\mathfrak U$ on $P$, and vice versa. And the corresponding generalized Futaki invariant of $\mathfrak U$ is computed by (cf. \cite[Theorem 3.3]{AK} and \cite[Section 3.2]{LZZ})
\begin{align}\label{fut-u}
\text{Fut}(\mathfrak U)=\int_{P_+}\langle y-2\rho,\nabla f_\mathfrak U\rangle\pi\,dy.
\end{align}
We have two cases:\\
\emph{Case-1.} $\mathbf {b}(P_+)-2\rho$ does not in the semisimple part $\mathfrak a_{ss}$ of $\mathfrak a$. Then we take a test configuration $\mathfrak U$ so that $f_\mathfrak U=\xi^iy_i$ for a non zero $\xi\in\mathfrak z(\mathfrak g)$. By \eqref{fut-u},
\begin{align*}
\text{Fut}(\mathfrak U)=\text{Vol}(P_+)\xi(\mathbf{b}(P_+))\not=0.
\end{align*}
Thus $M$ is K-unstable.\\
\emph{Case-2.} $\mathbf {b}(P_+)-2\rho\in\mathfrak a_{ss}$. Let $\{\alpha_i\}_{i=1}^r$ be the simple roots in $\Phi_+$. By \eqref{bar-unstable}, without loss of generality we can assume that
$$\mathbf{b}(P_+)-2\rho=\sum_{i=1}^rc_i\alpha_i,$$
where $c_1<0$. Let $\{\varpi_i\}_{i=1}^r$ the corresponding fundamental weights in $\mathfrak a$ such that $\langle\alpha_i,\varpi_j\rangle=\frac12|\alpha_j|^2\delta_{ij}.$ Put $$f(y)=\max_{w\in W}\{\langle w\cdot \varpi_1, y\rangle\}.$$
Then $f$ is a $W$-invariant, convex piecewise linear function $f_\mathfrak U$ defined on $P$, which is not affine on the whole $P$. Hence it gives a non-product test configuration $\mathfrak U_f$. Note that since $\varpi_1$ is dominant,
$$f|_{P_+}(y)=\langle\varpi_1,y\rangle.$$
By \eqref{fut-u}, we have
\begin{align*}
\text{Fut}(\mathfrak U_f)=\frac12c_1|\alpha_1|^2\text{Vol}(P_+)<0.
\end{align*}
Thus we see that $M$ is K-unstable. The last point then follows from \cite[Theorem 1.1]{Berman-inve}.
\end{proof}

\section{Classification of $\mathbb Q$-Fano $G$-compactifications}
We first prove an elementary lemma:
\begin{lem}\label{linear-alg}
Let $A=(a_{ij}),1\leq i,j\leq r$ be an $r\times r$-positive defined real matrix such that $a_{ij}\leq0$ whenever $i\not=j$. Denote by $A^{-1}=(a^{ij})$ its inverse. Then
\begin{align}\label{ij-entry}
a^{ij}\geq0,~1\leq i,j\leq r.
\end{align}
\end{lem}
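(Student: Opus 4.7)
I recognize this as the classical statement that a Stieltjes matrix---symmetric positive definite with non-positive off-diagonal entries---is inverse-non-negative. The plan is to exhibit $A^{-1}$ as a Neumann series whose partial sums are manifestly entrywise non-negative.

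First I would use positive definiteness of $A$ to note that $a_{ii} = e_i^{T} A\, e_i > 0$, so the diagonal matrix $D := \mathrm{diag}(a_{11},\ldots,a_{rr})$ admits a positive square root. Setting $B := D - A$, the sign hypothesis on the off-diagonal entries of $A$ makes $B$ entrywise non-negative (its diagonal vanishes, and its $(i,j)$-entry for $i \neq j$ is $-a_{ij} \geq 0$). Conjugation yields
\[
D^{-1/2} A D^{-1/2} = I - C, \qquad C := D^{-1/2} B D^{-1/2} \geq 0 \text{ entrywise}.
\]
The task then reduces to showing $\rho(C) < 1$. Since $I-C$ is congruent to $A$, it is positive definite, so every real eigenvalue $\lambda$ of $C$ satisfies $\lambda < 1$.

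The main obstacle is promoting this bound on real eigenvalues to a bound on the spectral radius: \emph{a priori} $C$ could possess complex eigenvalues of modulus exceeding $1$. Here the entrywise non-negativity of $C$ becomes essential, as the Perron--Frobenius theorem guarantees that $\rho(C)$ is itself an eigenvalue of $C$, forcing $\rho(C) < 1$. With this in hand, the Neumann series $(I-C)^{-1} = \sum_{k \geq 0} C^k$ converges absolutely; each $C^k$ is entrywise non-negative, since the class of entrywise non-negative matrices is closed under products and sums. Hence $(I-C)^{-1} \geq 0$ entrywise, and conjugating back one obtains
\[
A^{-1} = D^{-1/2}(I-C)^{-1}D^{-1/2} \geq 0 \text{ entrywise},
\]
which is precisely \eqref{ij-entry}. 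Everything outside the Perron--Frobenius step is routine linear algebra.
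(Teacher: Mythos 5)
Your argument is correct, but it follows a genuinely different route from the paper. You use the classical M-matrix machinery: after the diagonal rescaling $D^{-1/2}AD^{-1/2}=I-C$ with $C\geq 0$ entrywise, you combine positive definiteness (which bounds the real eigenvalues of $C$ below $1$) with Perron--Frobenius (which says $\rho(C)$ is itself a real eigenvalue) to get $\rho(C)<1$, and then expand $(I-C)^{-1}$ as a Neumann series of non-negative terms. The paper instead runs a completely elementary induction on Gaussian elimination: it shows the first elimination step is realized by an upper-triangular matrix $B_1$ with non-negative entries and that the resulting Schur complement $a'_{ij}=a_{ij}-a_{i1}a_{1j}/a_{11}$ is again positive definite with non-positive off-diagonal entries, yielding a factorization $A=(B^{T})^{-1}B^{-1}$ with $B$ upper-triangular and entrywise non-negative, whence $A^{-1}=BB^{T}\geq 0$. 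Your approach buys generality (it works verbatim for non-symmetric positive definite matrices, since you only use real eigenvectors and the Perron root) and is the conceptually standard characterization of Stieltjes/M-matrices; its cost is the appeal to Perron--Frobenius, a non-trivial external input. The paper's induction is entirely self-contained and shorter, at the price of being tied to the triangular factorization. Both are complete proofs of the lemma.
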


\begin{proof}
We claim that there is an upper-triangle matrix $B=(b_{ij})$ with $b_{ij}\geq0$ such that
$$A=(B^T)^{-1}B^{-1}.$$
Once this is proved, the lemma follows directly from $A^{-1}=BB^T$ and $b_{ij}\geq0$.

We prove the claim by induction, put
$$B_1=\left(\begin{aligned}&1&&-\frac{a_{12}}{a_{11}}&&...&&-\frac{a_{1i}}{a_{11}}&&...&&-\frac{a_{1n}}{a_{11}}&\\
&0&&1&&...&&0&&...&&0&\\
&...&&...&&...&&...&&...&\\
&0&&0&&...&&0&&...&&1&\end{aligned}\right).$$
Then $B_1^TAB_1=\text{diag}(a_{11},A')$, where $A'=(a'_{ij}),2\leq i,j\leq r$ is an $(r-1)\times(r-1)$-positive defined matrix with
$$a'_{ij}=a_{ij}-\frac{a_{i1}a_{1j}}{a_{11}}\leq0,~\forall i\not=j.$$
By induction we prove the claim.
\end{proof}
We can conclude that:
\begin{coro}\label{inverse-cartan}
Let $\{\alpha_i\}_{i=1}^r$ be the simple roots in $\Phi_+$ and $\{\varpi_i\}_{i=1}^r$ the corresponding fundamental weights in $\mathfrak a$. Then
$$\langle\varpi_i,\varpi_j\rangle\geq0,~1\leq i,j\leq r.$$
\end{coro}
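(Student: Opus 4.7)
The plan is to reduce the inequality $\langle \varpi_i, \varpi_j \rangle \geq 0$ directly to Lemma \ref{linear-alg} applied to the Gram matrix of the simple roots. First, I would expand each fundamental weight in the basis of simple roots, $\varpi_j = \sum_k c_{kj} \alpha_k$, and assemble the coefficients into a matrix $C = (c_{kj})$. Let $A$ be the matrix with entries $A_{ij} = \langle \alpha_i, \alpha_j \rangle$. Plugging the expansion into the defining relation $\langle \alpha_i, \varpi_j \rangle = \tfrac{1}{2}|\alpha_j|^2 \delta_{ij}$ gives the matrix equation $AC = D$, where $D$ is the diagonal matrix with entries $D_{ii} = \tfrac{1}{2}|\alpha_i|^2 > 0$. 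Since $A$ is the Gram matrix of the linearly independent simple roots with respect to the inner product $\langle\cdot,\cdot\rangle$, it is positive definite, and in particular invertible, so $C = A^{-1}D$.

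Next I would compute the inner products of the fundamental weights in terms of $A^{-1}$. A short calculation yields
\begin{equation*}
\langle \varpi_i, \varpi_j \rangle \,=\, (C^T A C)_{ij} \,=\, (D A^{-1} A A^{-1} D)_{ij} \,=\, (D A^{-1} D)_{ij} \,=\, \tfrac{1}{4}|\alpha_i|^2 \, |\alpha_j|^2 \, (A^{-1})_{ij},
\end{equation*}
using the symmetry of $A$ and $D$. So it suffices to show that every entry $(A^{-1})_{ij}$ is non-negative.

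Here the hypotheses of Lemma \ref{linear-alg} apply to $A$: it is a real positive definite symmetric matrix, and it is a standard property of a simple root system that $\langle \alpha_i, \alpha_j \rangle \leq 0$ whenever $i \neq j$, so the off-diagonal entries of $A$ are non-positive. Lemma \ref{linear-alg} then yields $(A^{-1})_{ij} \geq 0$ for all $i,j$, and combined with the identity above this gives $\langle \varpi_i, \varpi_j \rangle \geq 0$, completing the proof. I do not anticipate a genuine obstacle; the only point of care is invoking the correct normalization so that the factor $D$ appears cleanly and the Gram matrix $A$ is what gets inverted.
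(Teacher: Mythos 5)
Your proof is correct and follows essentially the same route as the paper: both reduce the claim to the non-negativity of the entries of $A^{-1}$, where $A=(\langle\alpha_i,\alpha_j\rangle)$ is the positive definite Gram matrix of the simple roots with non-positive off-diagonal entries, and then invoke Lemma \ref{linear-alg}. The only difference is that you write out the identity $\langle\varpi_i,\varpi_j\rangle=\tfrac14|\alpha_i|^2|\alpha_j|^2(A^{-1})_{ij}$ explicitly, whereas the paper phrases the same reduction through the inverse of the Cartan matrix $C=\Lambda A$ and leaves that computation implicit.
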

\begin{proof}
This is equivalent to that all entries of the inverse of the Cartan matrix $C=(c_{ij})$ are non-negative. By definition, $$c_{ij}=2\frac{\langle\alpha_i,\alpha_j\rangle}{|\alpha_i|^2}.$$
It is direct to see that $$C=\Lambda A,$$ where $$\Lambda=\text{diag}(\frac{2}{|\alpha_1|^2},...,\frac{2}{|\alpha_r|^2})$$ is a positive defined diagonal matrix and $A=(\langle\alpha_i,\alpha_j\rangle)$ is a positive defined matrix. Since $\alpha_1,...,\alpha_r$ are simple roots, it holds $$\langle\alpha_i,\alpha_j\rangle\leq0,~\text{ if }i\not=j.$$ The corollary then can be concluded from Lemma \ref{linear-alg}.
\end{proof}

\begin{coro}\label{coe-u}
Let $u=\sum_{i=1}^rc_i\varpi_i$ be a vector in $\overline{\mathfrak a_+}$.
\begin{itemize}
\item[(1)] $\rho(u)=0$ if and only if $u=0$;
\item[(2)] The coefficients
$$c_j\leq\frac{4\rho(u)}{|\alpha_j|^2},~\forall j\in\{1,...,r\}.$$
\end{itemize}
\end{coro}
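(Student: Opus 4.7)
The plan is to reduce both statements to the classical identity $\rho = \sum_{i=1}^{r}\varpi_{i}$ (which follows from $\langle\rho,\alpha_j\rangle=\tfrac{1}{2}|\alpha_j|^2$ for every simple root $\alpha_j$) together with the non-negativity $\langle\varpi_i,\varpi_j\rangle\geq 0$ provided by Corollary~\ref{inverse-cartan}. The preliminary observation is that, because $u\in\overline{\mathfrak a_+}$, the defining relation $\langle\alpha_i,\varpi_j\rangle=\tfrac{1}{2}|\alpha_j|^{2}\delta_{ij}$ forces $\langle\alpha_i,u\rangle=\tfrac{1}{2}|\alpha_i|^{2}c_i$, so each $c_i\geq 0$. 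Substituting $\rho=\sum_i\varpi_i$ then gives the expansion
$$\rho(u)\;=\;\sum_{j=1}^{r}c_j\langle\rho,\varpi_j\rangle\;=\;\sum_{i,j=1}^{r}c_j\langle\varpi_i,\varpi_j\rangle,$$
and by Corollary~\ref{inverse-cartan} every summand on the right is non-negative.

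For part~(1), positive-definiteness of the inner product yields $\langle\varpi_j,\varpi_j\rangle>0$, so in fact
$$\langle\rho,\varpi_j\rangle\;=\;\sum_i\langle\varpi_i,\varpi_j\rangle\;\geq\;\langle\varpi_j,\varpi_j\rangle\;>\;0$$
for every $j$. Thus $\rho(u)=0$ combined with $c_j\geq 0$ forces $c_j=0$ for every $j$, i.e.\ $u=0$. The reverse implication is trivial.

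For part~(2), fix $j$ and discard from the expansion of $\rho(u)$ all summands not containing $c_j$, each of which is non-negative. This gives
$$\rho(u)\;\geq\;c_j\sum_{i}\langle\varpi_i,\varpi_j\rangle\;=\;c_j\langle\rho,\varpi_j\rangle\;\geq\;c_j\langle\varpi_j,\varpi_j\rangle.$$
It then remains to show $\langle\varpi_j,\varpi_j\rangle\geq\tfrac{1}{4}|\alpha_j|^{2}$, which follows at once from Cauchy--Schwarz applied to the identity $\tfrac{1}{2}|\alpha_j|^{2}=\langle\varpi_j,\alpha_j\rangle\leq|\varpi_j|\,|\alpha_j|$. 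Dividing then yields the claimed bound $c_j\leq 4\rho(u)/|\alpha_j|^{2}$.

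I do not expect a genuine obstacle here. The only subtlety is that Corollary~\ref{inverse-cartan} supplies merely the weak Gram-matrix bound $\langle\varpi_i,\varpi_j\rangle\geq 0$, so one must additionally invoke positive-definiteness of $\langle\cdot,\cdot\rangle$ to get the strict inequality used in~(1), and one must invoke Cauchy--Schwarz (rather than any cruder estimate) to obtain the sharp constant $4/|\alpha_j|^{2}$ in~(2). Both upgrades are immediate.
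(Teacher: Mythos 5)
Your proof is correct. Part (1) coincides with the paper's argument: both expand $\rho(u)=\sum_{i,j}c_i\langle\varpi_i,\varpi_j\rangle$, invoke Corollary~\ref{inverse-cartan} together with $c_i\geq 0$, and conclude that the positive diagonal terms $c_i\langle\varpi_i,\varpi_i\rangle$ must all vanish. For part (2), however, you take a genuinely different route. The paper argues directly from $c_j=2\alpha_j(u)/|\alpha_j|^2$ and the observation that $\alpha_j(u)\leq\sum_{\alpha\in\Phi_+}\alpha(u)=2\rho(u)$, since $\alpha(u)\geq 0$ for every positive root when $u\in\overline{\mathfrak a_+}$; this is completely elementary and does not use Corollary~\ref{inverse-cartan} at all for this part. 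You instead retain only the $c_j$-column of the Gram expansion, bound $\langle\rho,\varpi_j\rangle\geq\langle\varpi_j,\varpi_j\rangle$ by the same nonnegativity, and extract the constant via Cauchy--Schwarz from $\frac12|\alpha_j|^2=\langle\varpi_j,\alpha_j\rangle\leq|\varpi_j|\,|\alpha_j|$. Both arguments land on exactly the constant $4/|\alpha_j|^2$; the paper's is shorter and needs less machinery, while yours stays entirely within the fundamental-weight picture already set up for part (1) and isolates the clean auxiliary fact $|\varpi_j|\geq\frac12|\alpha_j|$.
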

\begin{proof}
Since $u=\sum_{i=1}^rc_i\varpi_i\in\overline{\mathfrak a_+}$, $c_j\geq0$ for all $j$. By Corollary \ref{inverse-cartan}, we have
$$\rho(u)=\sum_{1\leq i,j\leq r}c_i\langle\varpi_i,\varpi_j\rangle\geq\sum_{1\leq i\leq r}c_i\langle\varpi_i,\varpi_i\rangle.$$
Thus we get (1). For tem (2), since
$\varpi_i(\alpha_j)=\frac12{|\alpha_j|^2}\delta_{ij},$
we have
$$c_j=\frac{2\alpha_j(u)}{|\alpha_j|^2}\leq\frac{2\sum_{\alpha\in\Phi_+}\alpha(u)}{|\alpha_j|^2}=\frac{4\rho(u)}{|\alpha_j|^2}.$$
\end{proof}
Then we introduce a label $I(P)$ to each $P$ for our classification. By Lemma \ref{sg-polytope-coefficient}, each outer facet \footnote {An facet of $P_+$ is called an outer one if it does not lie in any Weyl wall, cf. \cite{LZZ}.} of $P_+$ must lies on some line
\begin{align}\label{sg-line-pq}l_{A}(y)=(1+2\rho(u_A))-u_A(y)\,=\,0
\end{align}
for some prime norm $u\in\mathfrak N$. Assume that each $l_{A}\geq 0$ on $P$. By convexity and $W$-invariance of $P$, we get $u\in\overline{\mathfrak a_+}.$ Consider the intersection of $P_+$ with the ray $\{t\rho|t\geq0\}$, namely, a point $t_0\rho\in\partial P_+, t_0>0$. Then
$$t_0=2(1+\frac1{2\rho(u_{A_0})})$$ for some $A_0\in\{1,...,d_+\}$, and there is a corresponding outer facet $F_{A_0}$ of $P_+$ which lies on some $\{l_{A_0}=0\}$.

We associate this number $$I(P):=2\rho(u_0)$$ to each $\mathbb Q$-Fano polytope $P$ (and hence each $\mathbb Q$-Fano $G$-compactifications). Now we look at the intersection of the ray $t\rho$ with other hyperplanes $\{y|l_A=0\}$, where $A\in\{1,...,d_+\}$. By Corollary \ref{coe-u} (1), for any $A$, the intersection point $t_A\rho$, where
$$t_A=2(1+\frac1{2\rho(u_A)})>0$$
always exists.

On the other hand, by convexity, for other $A\in\{1,...,d_+\}$, if $l_A(t_A\rho)=0$, it must hold
$t_A\geq t_0,$
or equivalently, $$\rho(u_A)\leq\rho(u_{A_0})=I(P).$$
Thus, for each $u_A=\sum_ic^A_i\varpi_i\in\mathfrak N\subset\text{Span}_{\mathbb Z}\{\varpi_i|i=1,...,r\}$, by Corollary \ref{coe-u} (2),
$$c^A_j\in\mathbb Z\cap[0,\frac{4I(P)}{|\alpha_j|^2}].$$
Hence, for fixed $I(P)$, there are only finite choices of $u_A$ in \eqref{sg-polytope-eq}.
We conclude that:
\begin{prop}\label{I(p)-bound}
For each $k\in\mathbb N_+$, there are only finitely possible $\mathbb Q$-Fano $G$-compactifications whose polytope $P$ satisfies $I(P)=k$.
\end{prop}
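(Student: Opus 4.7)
My plan is to consolidate the bound derived in the paragraphs just before the statement with one additional step, namely the passage from polytopes to compactifications. First, fix $k\in\mathbb N_+$ and any $\mathbb Q$-Fano $G$-compactification $M$ whose polytope $P$ satisfies $I(P)=k$. From the definition of $I(P)$ via the first exit of the ray $\{t\rho\mid t\ge 0\}$ from $P_+$, convexity of $P_+$ forces every outer facet normal $u_A\in\overline{\mathfrak a_+}\cap\mathfrak N$ to satisfy $\rho(u_A)\le \rho(u_{A_0})$, so $\rho(u_A)$ is controlled by $k$. Writing $u_A=\sum_{i=1}^r c_i^A\varpi_i$, the $c_i^A$ are non-negative (since $u_A\in\overline{\mathfrak a_+}$) and integer-valued (since $u_A\in\mathfrak N$), and by Corollary \ref{coe-u}(2) each of them admits an explicit upper bound in terms of $k$ and $|\alpha_i|^2$. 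Hence the set $\mathcal U_k$ of possible lattice normals is finite.

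Next I would observe that the collection of outer facet normals alone determines $P$ completely. By Lemma \ref{sg-polytope-coefficient} each $u_A$ forces its defining constant $\lambda_A=1+2\rho_A(u_A)$, so once we select a subset $\{u_{A_j}\}\subset\mathcal U_k$ we get a well-defined intersection of half-spaces $\bigcap_j\{l_{A_j}^o\ge 0\}$ which, when intersected with $\overline{\mathfrak a_+^*}$, recovers $P_+$; the $W$-invariance of $P$ then recovers the full polytope as $\bigcup_{w\in W}w(P_+)$. Since $|\mathcal U_k|<\infty$, there are at most $2^{|\mathcal U_k|}$ possibilities, so only finitely many polytopes $P$ arise with $I(P)=k$.

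Finally, I would invoke the Luna--Vust/Brion correspondence in the form of \cite[Theorem 2.4]{AK}, which says that a polarized $G$-compactification is determined up to $G\times G$-equivariant isomorphism by its associated polytope; combined with the previous step this upgrades finiteness of polytopes to finiteness of $\mathbb Q$-Fano $G$-compactifications. The main (mildly) delicate point is to verify that the ray $\{t\rho\}$ really exits $P_+$ through an \emph{outer} facet $F_{A_0}$, so that the inequality $\rho(u_A)\le \rho(u_{A_0})$ is legitimately in play; this follows from $\rho$ lying in the interior of $\mathfrak a_+^*$ and has already been handled in the paragraph preceding the statement, so no genuine new obstacle remains.
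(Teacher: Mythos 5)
Your proposal is correct and follows essentially the same route as the paper: the convexity argument along the ray $\{t\rho\}$ gives $\rho(u_A)\le\rho(u_{A_0})$, and Corollary \ref{coe-u}(2) then confines the lattice normals $u_A$ to a finite set depending only on $k$. The only difference is that you spell out the (implicit in the paper) final steps that the normals determine $P$ via Lemma \ref{sg-polytope-coefficient} and $W$-invariance, and that $P$ determines the compactification; these additions are welcome but do not change the argument.
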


\section{Estimate of barycenter}

We will give an estimate of $\mathbf {b}(P_+)$.
For a domain $\Omega\subset\mathbb R^r$ and a function $f:\Omega\to\mathbb R$, denote by
$$\bar f_\pi(\Omega):=\frac{\int_\Omega f\pi dy}{\int_\Omega \pi dy}$$
the average of $f$ on $\Omega$ with respect to the weight $\pi$.

\begin{prop}\label{for-large-I}
Let
$\varphi(y):=u_{A_0}(y).$
Then this a number $\omega(n)>0$ which depends only on $n$ such that if $I(P)\geq\omega(n)$,
\begin{align}\label{lem-21}
\varphi(\mathbf{b}(P_+))<\varphi(2\rho).
\end{align}
In particular, the corresponding $G$-compactification does not admit any K\"ahler-Einstein metric.
\end{prop}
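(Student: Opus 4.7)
The plan is to establish the quantitative bound
\[\varphi(\mathbf{b}(P_+))\,\leq\,\tfrac{n}{n+1}\bigl(1+I(P)\bigr),\]
which is strictly less than $I(P)=\varphi(2\rho)$ as soon as $I(P)>n$, and then to invoke Lemma~\ref{nece-sg-bary} for the ``in particular'' statement. The bound itself is obtained by a polar (conic) decomposition of $P_+$ centred at the origin, exploiting the fact that $0$ lies in the interior of $P_+$ (because every $\lambda_A=1+2\rho_A(u_A)\geq 1>0$) and the fact that $\pi$ and $\varphi$ are positively homogeneous of degrees $2|\Phi_+|$ and $1$ respectively.

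Write $y=t\omega$ with $t\geq 0$ and $\omega\in S^{r-1}\cap\overline{\mathfrak a^*_+}$. Since $P_+$ is bounded and star-shaped around $0$, the ray $\{t\omega\}_{t\geq 0}$ exits $P_+$ at the finite radius
\[\tau(\omega)=\min_{A\,:\,u_A(\omega)>0}\frac{1+2\rho(u_A)}{u_A(\omega)}.\]
Evaluating this minimum at the particular index $A=A_0$ yields $\tau(\omega)\leq (1+I(P))/\varphi(\omega)$, i.e.\ the pointwise estimate
\[\tau(\omega)\,\varphi(\omega)\,\leq\,1+I(P)\qquad\text{on }S^{r-1}\cap\overline{\mathfrak a^*_+},\]
which is the heart of the argument. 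Using $\pi(t\omega)=t^{2|\Phi_+|}\pi(\omega)$, $\varphi(t\omega)=t\varphi(\omega)$, $dy=t^{r-1}\,dt\,d\omega$ and the identity $n=r+2|\Phi_+|=\dim G$, carrying out the $t$-integration on each ray produces
\[\varphi(\mathbf{b}(P_+))\,=\,\frac{n}{n+1}\cdot\frac{\int_{S^{r-1}\cap\overline{\mathfrak a^*_+}}\pi(\omega)\tau(\omega)^{n}\,\bigl(\tau(\omega)\varphi(\omega)\bigr)\,d\omega}{\int_{S^{r-1}\cap\overline{\mathfrak a^*_+}}\pi(\omega)\tau(\omega)^{n}\,d\omega},\]
into which the pointwise bound plugs directly to give $\tfrac{n}{n+1}(1+I(P))$.

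Setting $\omega(n):=n+1$, for $I(P)\geq\omega(n)$ one has $I(P)>n$ and hence $\tfrac{n}{n+1}(1+I(P))<I(P)$, which is precisely \eqref{lem-21}. For the last clause, $u_{A_0}\in\overline{\mathfrak a_+}$ is dominant, so it pairs non-negatively with every element of $\overline\Xi=\{\sum_{\alpha\in\Phi_+}c_\alpha\alpha:\,c_\alpha\geq 0\}$; therefore $\varphi(\mathbf{b}(P_+)-2\rho)<0$ forces $\mathbf{b}(P_+)-2\rho\notin\overline\Xi$, and Lemma~\ref{nece-sg-bary} then rules out any singular K\"ahler-Einstein metric on $M$. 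The only real obstacle is identifying the correct weighted polar decomposition so that the estimate $\tau\varphi\leq 1+I(P)$ feeds in cleanly; once that geometric observation is made the rest is a one-line integration and the dimensional constant emerges as $\omega(n)=n+1$.
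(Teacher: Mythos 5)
Your proof is correct, and it takes a genuinely different route from the paper's. The paper also exploits the homogeneity of $\pi$ and the identity $n=r+2|\Phi_+|$, but it truncates the radial integration at the hyperplane $\hat\Pi_0=\{u_{A_0}(y)=2\rho(u_{A_0})\}$: it splits $P_+$ into the cone $\Omega_1$ over the slice $\hat P=\hat\Pi_0\cap P_+$, the cap $\Omega_2=P_+\cap\{\varphi\geq 2\rho(u_{A_0})\}$ (bounded by a thin conic shell over $\hat P$), and the remainder $\Omega_3$ where $\varphi<\varphi(2\rho)$ pointwise, and then estimates the weighted average piece by piece; the resulting threshold $\omega(n)$ is only implicit, extracted from the asymptotics of $\bigl(1+\tfrac{1}{2\rho(u_{A_0})}\bigr)^n$. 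You instead integrate each ray all the way to its exit radius $\tau(\omega)$ and feed in the single pointwise inequality $\tau(\omega)\varphi(\omega)\leq\lambda_{A_0}=1+I(P)$, which collapses the whole computation to $\varphi(\mathbf b(P_+))\leq\tfrac{n}{n+1}(1+I(P))$ and the clean explicit constant $\omega(n)=n+1$. That is arguably tidier and at least as sharp as what the paper's displayed estimate actually yields. Two minor points to fix in the write-up: $0$ is \emph{not} in the interior of $P_+$ (it is the apex of the Weyl chamber); all you need, and all you use, is that $0\in P_+$ and $P_+$ is star-shaped about $0$, so every ray in a dominant direction is a segment $[0,\tau(\omega)\omega]$. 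Also, in the formula for $\tau(\omega)$ the facet constants are $\lambda_A=1+2\rho_A(u_A)$ in general, not $1+2\rho(u_A)$; this is harmless since the only term you evaluate is $A=A_0$, for which $\rho_{A_0}=\rho$, and when $u_{A_0}(\omega)=0$ the product $\tau\varphi$ vanishes, so the pointwise bound holds on all of $S^{r-1}\cap\overline{\mathfrak a^*_+}$. The deduction of the final clause via the dominance of $u_{A_0}$ and Lemma \ref{nece-sg-bary} coincides with the paper's.
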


\begin{proof}
Consider the hyperplane
$$\hat\Pi_0:=\{\hat l_{A_{0}}(y):=2\rho(u_{A_0})-u_{A_0}(y)=0\}.$$
Then $P_+$ cuts out a codimensional one bounded polytope $\hat P=\hat\Pi_0\cap P_+$ on $\hat\Pi_0$. We can divide $P_+$ into three parts:

\begin{align*}
\Omega_1&=\{t\hat P|t\in[0,1]\};\\
\Omega_2&=P_+\cap\{y|\varphi(y)\geq 2\rho(u_{A_0})\};\\
\Omega_3&=P_+\setminus (\Omega_1\cup\Omega_2).
\end{align*}

Take a parametrization
$$y(t,s^1,...,s^{r-1})=t\hat y(s^1,...,s^{r-1}),$$
where $\hat y(s^1,...,s^{r-1})$ is a parametrization of $\hat \Pi_0$. By direct computation, we see that for a homogenous function $f_m(y):=\prod_{i=1}^r(y_i)^{m_i}$,
$$f_m(y) dy=t^{r-1+\sum_{i=1}^rm_i}f_m(\hat y)dt\wedge d\hat y.$$
Hence
\begin{align}\label{V-2}
\text{Vol}(\Omega_1)=\frac1n\int_{\hat P}\pi(\hat y)d\hat y,
\end{align}
and
\begin{align}\label{phi-2}
\int_{\Omega_1}\varphi \pi dy=\frac{2\rho(u_{A_0})}{n+1}\int_{\hat P}\pi(\hat y)d\hat y.
\end{align}
On the other hand, by convexity, we have
$$\Omega_2\subset\{t\hat P|t\in[1,1+\frac1{2\rho(u_{A_0})}]\}.$$
Thus
\begin{align}\label{V-4}
\text{Vol}(\Omega_2)\leq\frac1n\left((1+\frac1{2\rho(u_{A_0})})^n-1\right)\int_{\hat P}\pi(\hat y)d\hat y=:V_2'.
\end{align}
Obviously,
\begin{align}\label{phi-4}
\varphi(y)\leq2\rho(u_{A_0})+1\text{ on }\Omega_2.
\end{align}
Combining \eqref{V-2}-\eqref{phi-4}, we see that the average of $\varphi$ on $\Omega_2\cup\Omega_4$ satisfies
\begin{align*}
\bar{\varphi}_\pi(\Omega_1\cup\Omega_2)&=\frac1{\text{Vol}(\Omega_1\cup\Omega_2)}{\int_{\Omega_1\cup\Omega_2}} \varphi \pi  dy'\\&\leq\frac{(\int_{\Omega_1}\varphi \pi(y) d y)+2\rho(u_{A_0})(1+\frac1{2\rho(u_{A_0})})V_2'}{\text{Vol}(\Omega_1)+V_1'}\\
&=2\rho(u_{A_0})\cdot\frac{\frac n{n+1}+(1+\frac1{2\rho(u_{A_0})})((1+\frac1{2\rho(u_{A_0})})^n-1)}{(1+\frac1{2\rho(u_{A_0})})^n}.
\end{align*}

Hence there is a number $\omega(n)>0$ which depends only on $n$ such that
$$\bar{\varphi}_\pi (\Omega_1\cup\Omega_2)< 2\rho(u_{A_0})=\varphi(2\rho),~I(P)\geq\omega(n).$$
Note that
$$\varphi(y)<\varphi(2\rho)\text{ on }\Omega_3.$$
By the fact that
$$\varphi(\mathbf {b}(P_+))=\bar \varphi_\pi (P_+)$$
and the above two inequalities, we get the desired estimate \eqref{lem-21}.

Since
$$2\rho+\overline{\Xi}\subset\{\varphi(y)\geq\varphi(2\rho)\}.$$
The last statement of the proposition then follows from Lemma \ref{nece-sg-bary}.
\end{proof}

\begin{proof}[Proof of Theorem \ref{finiteness-thm}]
By Proposition \ref{lem-21}, it suffices to consider the cases with $I(P)\leq\omega(n)$.  While by Proposition \ref{I(p)-bound}, when $I(P)\leq\omega(n)$ there are only finitely many possible choices of $P$, which may admit (singular) K\"ahler-Einstein metric. Hence we prove the theorem.
\end{proof}

\section{Application to $\mathbb Q$-Fano  $SO_4(\mathbb C)$-compactifications}
\subsection{The classification result}
In this section, we show Theorem \ref{SO-finiteness-thm} as an application of Theorem \ref{finiteness-thm}. We adopt the notations as in \cite[Section 7]{LTZ2}. In particular $p_0$ in \cite[Section 7.2]{LTZ2} is precisely $I(P)$ defined here in Section 3. By direct computation, we see that $\omega(6)=3.83$. Thus, it suffices to check all possible campactifications with $p_0(=I(P))\leq3.$

By using software \texttt{Wolframe Mathematica 11.3}, we find that there are only two $\mathbb Q$-Fano  $SO_4(\mathbb C)$-compactifications which admit K\"ahler-Einstein metrics. The corresponding polytopes $P_+$ are:
\begin{itemize}
\item [Case 5.1.] $P_+=\{(x,y)|0\leq x\leq3,~-x\leq y\leq x\}$;
\item [Case 5.2.] $P_+=\{(x,y)|0\leq x+y\leq3,~0\leq x-y\leq3\}.$
\end{itemize}
\subsection{The \texttt{Wolframe Mathematica} code}
The following is the \texttt{Wolframe Mathema-} \texttt{tica 11.3} code for finding the $\mathbb Q$-Fano $SO_4(\mathbb C)$-compactifications with a given $p_0$ (the function ``FindComp") and test whether the compatification admits K\"ahler-Einstein metrics (the functions ``Coordinatecalculation" ``QKE"). The basic idea of finding compatifications is to add the edges of the polytope corresponding to $p$ as $p$ goes from $p_0$ to $1$, we determine the admissible edges (the set ``Ipos" and ``Ineg") for each $p$ corresponding to the polytope constructed in the $(p_0-p)$-th step, then choosing at most two edges from the admissible edges (one from ``Ipos" and the other from ``Ineg", of course the choice can be empty) and add them to the the set of edges of former constructed polytopes.   We should notice that the there are some redundance in the result of compatifications since under the action of Weyl group, the same polytope may have different forms.
$$
\begin{aligned}
&\texttt{FindComp[p\_] :=}\\
&\texttt{ (Polytopeset = \{\};}\\
&\texttt{  Lneg = \{\};}\\
&\texttt{  Lpos = \{\};}\\
&\texttt{  For[i = 1, i <= p - 1, i++,}\\
&\texttt{   If[CoprimeQ[i, p] == True, Lpos = Append[Lpos, i];}\\
&\texttt{    Lneg = Append[Lneg, -i]]];}\\
&\texttt{  For[i = 1, i <= Length[Lpos], i++,}\\
&\texttt{   Polytopeset = Append[Polytopeset, \{\{p, Lpos[[i]]\}\}];}\\
&\texttt{   For[j = 1, j <= Length[Lneg], j++,}\\
&\texttt{    Polytopeset =}\\
&\texttt{      Append[Polytopeset, \{\{p, Lpos[[i]]\}, \{p, Lneg[[j]]\}\}];}\\
&\texttt{    ];}\\
&\texttt{   ];}\\
&\texttt{  For[j = 1, j <= Length[Lneg], j++,}\\
\end{aligned}
$$
$$
\begin{aligned}
&\texttt{   Polytopeset = Append[Polytopeset, \{\{p, Lneg[[j]]\}\}];}\\
&\texttt{   ];}\\
&\texttt{  For[k = p - 1, k >= 1, k--,}\\
&\texttt{   len = Length[Polytopeset];}\\
&\texttt{   For[l = 1, l <= len, l++,}\\
&\texttt{    Polytope = Polytopeset[[l]];}\\
&\texttt{    pt = Polytope[[1]][[1]];}\\
&\texttt{    qt = Polytope[[1]][[2]];}\\
&\texttt{    pf = Polytope[[Length[Polytope]]][[1]];}\\
&\texttt{    qf = Polytope[[Length[Polytope]]][[2]];}\\
&\texttt{    Lpos = \{\};}\\
&\texttt{    Lneg = \{\};}\\
\end{aligned}
$$
$$
\begin{aligned}
&\texttt{    If[Length[Polytope] == 1,}\\
&\texttt{     For[i = k, i >= (2 k qt + pt + qt - k)/(1 + 2 pt), i--,}\\
&\texttt{      If[CoprimeQ[i, k] == True, Lpos = Append[Lpos, i];}\\
&\texttt{        ];}\\
&\texttt{      ];}\\
&\texttt{     For[j = -k, j <= (2 k qt + qt - pt + k)/(1 + 2 pt), j++,}\\
&\texttt{      If[CoprimeQ[j, k] == True, Lneg = Append[Lneg, j];}\\
&\texttt{        ];}\\
&\texttt{      ];}\\
&\texttt{     ];}\\
&\texttt{    If[Length[Polytope] >= 2,}\\
&\texttt{     pt2 = Polytope[[2]][[1]];}\\
&\texttt{     qt2 = Polytope[[2]][[2]];}\\
&\texttt{     pf2 = Polytope[[Length[Polytope] - 1]][[1]];}\\
&\texttt{     qf2 = Polytope[[Length[Polytope] - 1]][[2]];}\\
&\texttt{     For[i = k, i >= (2 k qt + pt + qt - k)/(1 + 2 pt), i--,}\\
\end{aligned}
$$
$$
\begin{aligned}
&\texttt{If[CoprimeQ[i, k] == True \text{\&\&}}\\
&\texttt{         i (pt - pt2) >= (k qt - pt2 qt - k qt2 + pt qt2),}\\
&\texttt{        Lpos = Append[Lpos, i];}\\
&\texttt{        ];}\\
&\texttt{      ];}\\
\end{aligned}
$$
$$
\begin{aligned}
&\texttt{     For[j = -k, j <= (2 k qf + qf - pf + k)/(1 + 2 pf), j++,}\\
&\texttt{      If[CoprimeQ[j, k] == True \text{\&\&}}\\
&\texttt{         j (pf - pf2) <= (k pf - pf2 qf - k qf2 + pf qf2),}\\
&\texttt{        Lneg = Append[Lneg, j];}\\
&\texttt{        ];}\\
&\texttt{      ];}\\
&\texttt{     ];}\\
\end{aligned}
$$
$$
\begin{aligned}
&\texttt{    For[i = 1, i <= Length[Lpos], i++,}\\
&\texttt{     Polytopeset =}\\
&\texttt{      Append[Polytopeset, Prepend[Polytope, \{k, Lpos[[i]]\}]];}\\
&\texttt{     For[j = 1, j <= Length[Lneg], j++,}\\
&\texttt{      Polytopeset =}\\
&\texttt{        Append[Polytopeset,}\\
&\texttt{         Append[Prepend[Polytope,\{k, Lpos[[i]]\}], \{k, Lneg[[j]]\}]];}\\
&\texttt{      ];}\\
&\texttt{     ];}\\
\end{aligned}
$$
$$
\begin{aligned}
&\texttt{    For[j = 1, j <= Length[Lneg], j++,}\\
&\texttt{     Polytopeset =}\\
&\texttt{       Append[Polytopeset, Append[Polytope, \{k, Lneg[[j]]\}]];}\\
&\texttt{     ];}\\
&\texttt{    ];}\\
&\texttt{   ];}\\
&\texttt{  Return[Polytopeset];}\\
&\texttt{  )}\\
&\texttt{  Coordinatecalculation[Polyset\_] :=}\\
&\texttt{ (Coordinateset = \{\};}\\
&\texttt{  For[i = 1, i <= Length[Polyset], i++,}\\
&\texttt{   Polyt = Polyset[[i]];}\\
&\texttt{   Vol = NIntegrate[}\\
&\texttt{     x\^2 y\^2 Boole[x - y > 0] Boole[x + y > 0] Product[}\\
&\texttt{       Boole[2 Polyt[[s]][[1]] + 1 >}\\
&\texttt{         Polyt[[s]][[1]] x + Polyt[[s]][[2]] y], \{s, 1,}\\
&\texttt{        Length[Polyt]\}], \{x, 0, Infinity\}, \{y, -Infinity, Infinity\}];}\\
\end{aligned}
$$
$$
\begin{aligned}
&\texttt{   xcord =}\\
&\texttt{    NIntegrate[}\\
&\texttt{     x\^3 y\^2 Boole[x - y > 0] Boole[x + y > 0] Product[}\\
&\texttt{       Boole[2 Polyt[[s]][[1]] + 1 >}\\
&\texttt{         Polyt[[s]][[1]] x + Polyt[[s]][[2]] y], \{s, 1,}\\
&\texttt{        Length[Polyt]\}], \{x, 0, Infinity\}, \{y, -Infinity, Infinity\}];}\\
&\texttt{   ycord =}\\
&\texttt{    NIntegrate[}\\
&\texttt{     x\^2 y\^3 Boole[x - y > 0] Boole[x + y > 0] Product[}\\
&\texttt{       Boole[2 Polyt[[s]][[1]] + 1 >}\\
&\texttt{         Polyt[[s]][[1]] x + Polyt[[s]][[2]] y], \{s, 1,}\\
&\texttt{        Length[Polyt]\}], \{x, 0, Infinity\}, \{y, -Infinity, Infinity\}];}\\
&\texttt{   Coordinateset = Append[Coordinateset, \{xcord/Vol, ycord/Vol\}];}\\
&\texttt{   ];}\\
&\texttt{  Return[Coordinateset];}\\
&\texttt{  )}\\
\end{aligned}
$$
$$
\begin{aligned}
&\texttt{  QKE[Cordset\_] :=}\\
&\texttt{ (KE = \{\};}\\
&\texttt{  For[i = 1, i <= Length[Cordset], i++,}\\
&\texttt{   cord = Cordset[[i]];}\\
&\texttt{   KE = Append[KE,}\\
&\texttt{     cord[[1]] + cord[[2]] >= 2 \text{\&\&} cord[[1]] - cord[[2]] >= 2];}\\
&\texttt{   ];}\\
&\texttt{  Return[KE];}\\
&\texttt{  )}\\
\end{aligned}
$$

\clearpage

\end{document}